\documentclass[11pt,leqno]{amsart}
\usepackage{amssymb,amsmath,amsthm,dsfont,fourier}
\usepackage{stmaryrd}

\usepackage{mathtools}
\usepackage{array}
\usepackage[text={6.5in,9in},centering]{geometry}
\usepackage[dvipsnames]{xcolor}
\usepackage{enumitem}
\usepackage{tikz-cd}
\usetikzlibrary{decorations.pathmorphing}
\usepackage[colorlinks=true,citecolor=NavyBlue,linkcolor=NavyBlue,urlcolor=NavyBlue]{hyperref}

\newcommand{\aar}[2][]{\xrightarrow[#1]{#2}}

\newcommand{\kker}{\operatorname{ker}}

\newtheorem{theorem}{Theorem}[section]
\newtheorem{proposition}[theorem]{Proposition}

\newtheorem{corollary}[theorem]{Corollary}
\theoremstyle{definition}
\newtheorem{definition}{Definition}[section]
\newtheorem{remark}[theorem]{Remark}
\newtheorem{example}[theorem]{Example}

\numberwithin{equation}{section}
\DeclareMathOperator{\Spec}{Spec}
\DeclareMathOperator{\Ass}{Ass}
\DeclareMathOperator{\MaxAss}{MaxAss}
\DeclareMathOperator{\length}{length}
\newcommand{\cP}{\mathcal P}

\begin{document}
\let\WriteBookmarks\relax
\def\floatpagepagefraction{1}
\def\textpagefraction{.001}

\title{A Mayer-Vietoris calculus for regular denominators}

\author{Elena Caviglia}
\address{[1] Department of Mathematics, Stellenbosch University, South Africa; [2] National Institute for Theoretical and Computational Sciences (NITheCS), South Africa}
\email{elena.caviglia@outlook.com}

\author{Amartya Goswami}
\address{[1] Department of Mathematics and Applied Mathematics, University of Johannesburg,
P.O. Box 524, Auckland Park 2006, South Africa; [2] National Institute for Theoretical and Computational Sciences (NITheCS), South Africa}
\email{agoswami@uj.ac.za}

\begin{abstract}
	Let $A$ be a commutative ring and let $I$ be an ideal. An element $a\in A$ is a regular denominator for $I$ when multiplication by $a$ on $A/I$ is injective; we denote the set of all such elements by $S_I$. We study how the common regular denominators for two ideals $I$ and $J$ are related to $I\cap J$ and $I+J$. This yields a particularly simple description when $I$ and $J$ are comaximal. Over Noetherian rings, the same viewpoint classifies denominator-equivalence classes by finite nonempty antichains of prime ideals, gives bounds for the associated primes of an intersection and leads to a local length identity. Furthermore, we show that flat base change preserves regular denominators, faithful flatness reflects them, and finite locally free quotients have fiberwise regular loci defined by determinants satisfying a multiplicative Mayer--Vietoris formula.  Our results provide alternatives to primary-decomposition computations in many concrete situations.
\end{abstract}

\keywords{
regular element, zero divisor, associated prime, Mayer-Vietoris sequence, flat base change, norm, primary decomposition}
\subjclass{13B30, 13E05, 13P10}

\maketitle

\section{Introduction}
When working modulo an ideal, not every element can be used safely as a denominator. Given a commutative ring $A$ and an ideal $I\subseteq A$, we say that $a\in A$ is a \emph{regular denominator} for $I$ if multiplication by $a$ on $A/I$ is injective; equivalently, the image of $a$ is a non-zero-divisor in $A/I$. We denote the set of all regular denominators for $I$ by
\[
S_I
:=\{a\in A\mid \mu_a^I\colon A/I\to A/I
\text{ is injective}\}
=\{a\in A\mid(I:a)=I\}.
\]
Notice that the set $S_I$ is multiplicatively closed, and the images of its elements in $A/I$ are precisely the regular elements inverted in the total quotient ring of $A/I$. Therefore, regular denominators belong to the basic framework of localization, primary decomposition, and the study of zero divisors. Their fundamental properties are developed in standard texts on commutative algebra; see, for example, \cite{AM69}, \cite{Eis95}, and \cite{Mat86}. Much of the theory about regular denominators considers one ideal at a time. The purpose of the present paper is to understand what happens when the regular denominators with respect to two ideals  of the same ring are considered simultaneously. In particular, given two ideals $I$ and $J$ of a commutative ring $A$, we want to study their common regular denominator by analizing those of the symmetric pair $I\cap J$ and $I+J$, without studying $I$ and $J$ separately.

Our starting point is the familiar short exact sequence
$$
0\longrightarrow A/(I\cap J)\longrightarrow A/I\oplus A/J
\longrightarrow A/(I+J)\longrightarrow 0
$$
that is the algebraic form of a Mayer--Vietoris construction. Fix an element $a\in A$. Multiplication by $a$ defines compatible endomorphisms of all three terms, so the snake lemma produces a six-term exact sequence relating the kernels and cokernels of multiplication on the four quotients determined by $I$, $J$, $I\cap J$, and $I+J$.
The resulting sequence carries concrete information about regular denominators. In particular, the kernel of its connecting homomorphism is described by the colon ideals $(I:a)$ and $(J:a)$. Consequently, once $a$ is known to be regular modulo $I\cap J$, the injectivity of the connecting map is equivalent to $a$ being regular modulo both $I$ and $J$. When $I$ and $J$ are comaximal, the comparison simplifies further: determining whether $a$ is a common regular denominator for $I$ and $J$ reduces to checking regularity only on $A/(I\cap J)$.

In the Noetherian case, regular denominators can be read directly from associated primes: the zero divisors of a finite module are precisely the elements lying in the union of its associated primes \cite[Theorem~6.1(ii), p.~38]{Mat86}. This turns the Mayer--Vietoris calculus developed in Section~\ref{sec:mv} into a study of finite collections of prime ideals.

More precisely, over a Noetherian ring, two proper ideals have the
same regular-denominator set if and only if their maximal associated
primes determine the same finite antichain in $\Spec(A)$. It follows that the denominator-equivalence classes of proper ideals are in bijection with the finite nonempty antichains in $\Spec(A)$ (see Proposition \ref{propdeneqantich}). Furthermore, each such class has a canonical radical representative. We will see that this radical representative does not coincide with the radical of the ideal in general. Indeed, it is determined by maximal associated primes and thus it retains information coming from embedded primes.

The same Mayer--Vietoris type short exact sequence also controls the associated primes of an intersection. Indeed, we prove (Proposition \ref{propass}) the following inclusions 
$$
\bigl(\Ass_A(A/I)\cup\Ass_A(A/J)\bigr)
\setminus\Ass_A(A/(I+J))
\subseteq
\Ass_A(A/(I\cap J))
\subseteq
\Ass_A(A/I)\cup\Ass_A(A/J).
$$
Under additional hypotheses guarantee that the relevant modules have finite length of the involved modules, the Mayer-Vietoris sequence also gives an alternating-sum formula involving the four kernels and four cokernels of multiplication (Proposition \ref{propformula}). Taken together, these results recover embedded-prime information that is not available from the topological support alone.

In Section~\ref{sec:basechange}, we study the behavior of regular denominators under base change. For this, we consider an $A$-algebra $B$ that represents a family of spaces living over the base space $\Spec(A)$. We show that flat base change preserves regularity, while faithful flatness allows it to be detected after passing to the larger ring. 
When $B/I$ is finite locally free over the base ring $A$, multiplication by an element $b\in B$ has a well-defined determinant in $A$. We show that this determinant records exactly where $b$ remains regular in the fibers: the relevant primes are precisely those of the principal open set associated to the determinant. This result fits into the theory of determinant and norm constructions for finite locally free modules; see, for example, \cite{KM76,Fer98}.

Using again the Mayer-Vietoris sequence, we relate these determinants for the four ideals $I$, $J$, $I\cap J$, and $I+J$, obtaining a multiplicative formula for the determinants and hence for the corresponding fiberwise regular loci (Theorem \ref{detthm}).

Our results also have a computational motivation. Determining associated primes and finding primary decompositions are standard problems in computational commutative algebra, with classical algorithms developed in \cite{GTZ88,EHV92}; at the same time, polynomial-ideal computations can become very complex in general \cite{MM82}. Although the results of our paper do not change that general complexity picture, they do provide, in situations where the hypotheses apply, a way to avoid some of the heavier computations. Questions about regular denominators can then be reduced to colon ideals, sums, intersections, equality tests, or determinants---operations that are already part of the standard Gr\"obner-basis and linear-algebra toolkit; see, for example, \cite{GP08}. The final section illustrates this point through explicit examples, showing in each case exactly which larger computation can be replaced by a simpler one.

The paper is organized as follows. In Section~\ref{sec:mv},  we introduce the Mayer-Vietoris exact sequence and derive its first consequences for common regular denominators. The purpose of Section~\ref{sec:noeth} is to consider the Noetherian case, where we study denominator-equivalence classes, associated primes, and the resulting length formula. In Section~\ref{sec:basechange}, we examine the behavior of regular denominators under flat base change and describe the corresponding determinant loci. Finally, in Section~\ref{sec:alg}, we explain how these results can be used computationally and illustrate the resulting simplifications through examples.

\section{The Mayer--Vietoris sequence for regular denominators}\label{sec:mv}
Let $A$ be a commutative ring with unit and let $I$ be an ideal of $A$. Given an element $a\in A$, we consider multiplication by $a$ on $A/I$. This is the $A$-module homomorphism 
$$\mu_a^I\colon A/I \rightarrow A/I$$ 
that sends $x+I\in A/I$ to $ax+I\in A/I$. The kernel and the cokernel of $\mu_a^I$ are respectively 
\begin{align*}
K_a^I:&= \kker(\mu^I_a)=(I:a)/I	\\
\noalign{\hbox{and}}
C_a^I&:={\operatorname{coker}(\mu_a^I)=\frac{A/I}{a(A/I)}\cong A/(I+aA)}.
\end{align*}
We write
\[
S_I:=\{a\in A\mid K_a^I=0\}=\{a\in A\mid (I:a)=I\}
\]
for the regular-denominator set of $I$. Thus $a\in S_I$ exactly when its class is a non-zero-divisor on $A/I$.

The following theorem applies the snake lemma to produce the Mayer--Vietoris exact sequence for regular denominators. This result will play a crucial role in the rest of the paper. 

\begin{theorem}\label{thmMayerVietsequence}
    Let $I,J$ be ideals of $A$ and let $a\in A$. There is an exact sequence 
    $$0\,\aar{} K^{I \cap J}_a \aar{\eta_a} {K_a^I \oplus K_a^J} \aar{\alpha_a} K_a^{I+J}  \aar{\delta_a} C_a^{I \cap J} \aar{\beta_a} {C_a^I \oplus C_a^J} \aar{\gamma_a} C_a^{I+J} \aar{} 0 $$
    where 
\begin{enumerate}
\item[$\bullet$] $\eta_a\bigl(c+(I\cap J)\bigr)
    = (c+I,c+J),$
\item[$\bullet$] $\alpha_a(d+I,b+J)
  =d-b+(I+J),$
\item[$\bullet$]    $\beta_a\bigl(d+(I\cap J+aA)\bigr)
   =(d+(I+aA),d+(J+aA)),$
\item[$\bullet$] $\gamma_a(d+(I+aA),b+(J+aA))
    =d-b+(I+J+aA),$

    \item[$\bullet$] $\delta_a\bigl(c+(I+J)\bigr)=j+(I\cap J+aA)
    $ with $i\in I$ and $j\in J$ such that $ac=i+j$.
\end{enumerate}
\end{theorem}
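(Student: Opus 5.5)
The plan is to apply the snake lemma to the morphism of short exact sequences given by multiplication by $a$ on the Mayer--Vietoris sequence
$$0\longrightarrow A/(I\cap J)\xrightarrow{\ \iota\ } A/I\oplus A/J\xrightarrow{\ \pi\ } A/(I+J)\longrightarrow 0,$$
with $\iota(c+(I\cap J))=(c+I,c+J)$ and $\pi(d+I,b+J)=d-b+(I+J)$.

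First I check exactness of this sequence. The map $\iota$ is injective because $c\in I$ and $c\in J$ force $c\in I\cap J$. The map $\pi$ is surjective: $\pi(d+I,0+J)=d+(I+J)$. For exactness in the middle, $\pi\iota=0$ is clear. Conversely, if $d-b=i+j$ with $i\in I$ and $j\in J$, put $c:=d-i=b+j$. Then $c+I=d+I$ and $c+J=b+J$, so $(d+I,b+J)=\iota(c+(I\cap J))$.

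Since $\iota$ and $\pi$ are $A$-linear, the vertical maps $\mu_a^{I\cap J}$, $\mu_a^I\oplus\mu_a^J$ and $\mu_a^{I+J}$ commute with them. The snake lemma then gives the exact sequence
$$0\to\ker\mu_a^{I\cap J}\to\ker\mu_a^{I}\oplus\ker\mu_a^{J}\to\ker\mu_a^{I+J}\xrightarrow{\delta}\operatorname{coker}\mu_a^{I\cap J}\to\cdots\to\operatorname{coker}\mu_a^{I+J}\to0.$$
The leading $0$ comes from injectivity of $\iota$, and the trailing $0$ from surjectivity of $\pi$. The kernel and cokernel of a direct sum of maps are the direct sums of the kernels and cokernels. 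The maps $\eta_a,\alpha_a,\beta_a,\gamma_a$ are induced by $\iota$ and $\pi$, so after the identifications $K_a^I=(I:a)/I$ and $C_a^I\cong A/(I+aA)$ they are given by the stated formulas.

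The main point is to identify $\delta_a$ explicitly. Take $c+(I+J)\in K_a^{I+J}$, so $ac\in I+J$, and write $ac=i+j$. The snake recipe proceeds in four steps.
\begin{enumerate}
\item Lift $c+(I+J)$ along $\pi$ to $(c+I,0+J)$.
\item Multiply by $a$ to get $(ac+I,0+J)=(j+I,0+J)$, using $ac-j=i\in I$.
\item Recognise this as $\iota(j+(I\cap J))$: indeed $j\equiv j \pmod I$ and $j\equiv 0\pmod J$.
\item Take the class of $j$ in $C_a^{I\cap J}\cong A/(I\cap J+aA)$.
\end{enumerate}
This gives $\delta_a(c+(I+J))=j+(I\cap J+aA)$.

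The snake lemma already guarantees that this is independent of all choices, but a direct check confirms the formula is well defined.
\begin{itemize}
\item A different decomposition $ac=i'+j'$ gives $j-j'=i'-i\in I\cap J$.
\item Replacing $c$ by $c+i_0+j_0$ changes $ac$ by $ai_0+aj_0$, and hence changes $j$ by $aj_0$ modulo $I\cap J$, which lies in $aA$.
\end{itemize}
Exactness then follows from the snake lemma with no further work.
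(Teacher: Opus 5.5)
Your proposal is correct and follows the paper's proof: both apply the snake lemma to multiplication by $a$ on the short exact sequence $0\to A/(I\cap J)\to A/I\oplus A/J\to A/(I+J)\to 0$. Beyond what the paper writes, you prove exactness of this sequence directly (the paper cites it) and carry out the diagram chase giving $\delta_a(c+(I+J))=j+(I\cap J+aA)$, with a well-definedness check; the paper only asserts that $\delta_a$ is the map the chase produces.
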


\begin{proof}
    Consider the well-known short exact sequence (see, for instance, \cite[Lemma 29.4.6, Tag 0C4J]{Stk26})
    $$0\, \aar{} A/(I\cap J) \aar{\delta} {A/I \oplus A/J} \aar{\sigma} A/(I+J)\, \aar{} 0$$
    with $\delta(x+(I\cap J))=(x+I, x+J)$ and $\sigma (y+I,z+J)=((y-z)+(I+J))$. Following the notation introduced above, the following is a morphism of short exact sequences
   \[\begin{tikzcd}[ampersand replacement=\&]
	0 \& {A/(I\cap J)} \& {A/I \oplus A/J} \& {A/(I+J)} \& 0 \\
	0 \& {A/(I\cap J)} \& {A/I \oplus A/J} \& {A/(I+J)} \& 0
	\arrow[from=1-1, to=1-2]
	\arrow["\delta", from=1-2, to=1-3]
	\arrow["{\mu_a^{I \cap J}}", from=1-2, to=2-2]
	\arrow["\sigma", from=1-3, to=1-4]
	\arrow["{\mu_a^I\oplus \mu_a^J}", from=1-3, to=2-3]
	\arrow[from=1-4, to=1-5]
	\arrow["{\mu_a^{I+J}}", from=1-4, to=2-4]
	\arrow[from=2-1, to=2-2]
	\arrow["\delta", from=2-2, to=2-3]
	\arrow["\sigma", from=2-3, to=2-4]
	\arrow[from=2-4, to=2-5]
\end{tikzcd}\]
since the two squares trivially commute by definition. The snake lemma then yields the desired exact sequence
$$0\,\aar{} K^{I \cap J}_a \aar{\eta_a} {K_a^I \oplus K_a^J} \aar{\alpha_a} K_a^{I+J}  \aar{\delta_a} C_a^{I \cap J} \aar{\beta_a} {C_a^I \oplus C_a^J} \aar{\gamma_a} C_a^{I+J} \aar{} 0.$$
Notice that the connecting homomorphism $\delta_a$ is precisely the map obtained by the corresponding diagram chase when applying the snake lemma.
\end{proof}

\begin{remark}\label{remkerdelta}
    By construction, the kernel of the connecting homomorphism $\delta_{a}$ of Theorem \ref{thmMayerVietsequence} is given by 
    $$\ker(\delta_a)=((I:a)+(J:a))/(I+J).$$
    This means that the injectivity of $\delta_a$ is determined by whether $(I:a)+(J:a)=I+J$. This fact will be useful later.
\end{remark}

 We can now use the long exact sequence of Theorem \ref{thmMayerVietsequence} to develop effective tools to compute $S_I \cap S_J$. The following results are very useful to this end.

 \begin{proposition}\label{propeqfacts}
     Let $I,J$ be ideals of $A$ and let $a\in S_{I\cap J}$. The following facts are equivalent:
     \begin{enumerate}
         \item $a\in S_I \cap S_J$;
         \item $(I:a)+(J:a)=I+J$.
     \end{enumerate}
 \end{proposition}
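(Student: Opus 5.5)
The plan is to read the equivalence off the exact sequence of Theorem~\ref{thmMayerVietsequence}, together with the description of $\ker(\delta_a)$ in Remark~\ref{remkerdelta}. The hypothesis $a\in S_{I\cap J}$ says exactly that $K_a^{I\cap J}=0$. The sequence then begins
\[
0\to K_a^I\oplus K_a^J \aar{\alpha_a} K_a^{I+J}\aar{\delta_a} C_a^{I\cap J}.
\]
So $\alpha_a$ is injective, and by exactness $\operatorname{im}(\alpha_a)=\ker(\delta_a)$. It follows that $K_a^I\oplus K_a^J\cong \ker(\delta_a)$.

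With this isomorphism both directions are short. Condition (1) says $K_a^I=K_a^J=0$, which holds if and only if $\ker(\delta_a)=0$. By Remark~\ref{remkerdelta},
\[
\ker(\delta_a)=((I:a)+(J:a))/(I+J),
\]
and this vanishes precisely when $(I:a)+(J:a)\subseteq I+J$. The reverse inclusion $I+J\subseteq (I:a)+(J:a)$ always holds, since $I\subseteq(I:a)$ and $J\subseteq(J:a)$. Hence $\ker(\delta_a)=0$ if and only if condition (2) holds.

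The only point needing care is Remark~\ref{remkerdelta} itself, which the paper asserts "by construction". I would verify it directly, or bypass it by computing $\operatorname{im}(\alpha_a)$, since $\operatorname{im}(\alpha_a)=\ker(\delta_a)$ anyway. From the formula for $\alpha_a$, elements $(d+I,b+J)$ with $d\in(I:a)$ and $b\in(J:a)$ map to $d-b+(I+J)$. So $\operatorname{im}(\alpha_a)=((I:a)+(J:a))/(I+J)$, using that $(J:a)$ is closed under negation. This argument sidesteps any diagram chase for $\delta_a$.

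Finally, I would note that the direction (1)$\Rightarrow$(2) holds even without the hypothesis $a\in S_{I\cap J}$: if $(I:a)=I$ and $(J:a)=J$, then trivially $(I:a)+(J:a)=I+J$. The hypothesis is needed only for (2)$\Rightarrow$(1), where the injectivity of $\alpha_a$ lets the vanishing of its image force $K_a^I=K_a^J=0$.
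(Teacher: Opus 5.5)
Your proposal is correct and follows the paper's proof essentially verbatim. Like the paper, you use $K_a^{I\cap J}=0$ to get $K_a^I\oplus K_a^J\cong\ker(\delta_a)$ and then invoke Remark~\ref{remkerdelta}; your extra touches make explicit two points the paper leaves implicit, namely the direct computation $\ker(\delta_a)=\operatorname{im}(\alpha_a)=((I:a)+(J:a))/(I+J)$ from the formula for $\alpha_a$ and the automatic inclusion $I+J\subseteq (I:a)+(J:a)$.
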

\begin{proof}
	Since $a\in S_{I\cap J}$, we have
	$
	K_a^{I\cap J}=0.
	$
	Hence the exact sequence of Theorem~\ref{thmMayerVietsequence} begins as
	\[
	0\longrightarrow 0
	\longrightarrow K_a^I\oplus K_a^J
	\xrightarrow{\alpha_a} K_a^{I+J}
	\xrightarrow{\delta_a} C_a^{I\cap J}.
	\]
	Exactness at $K_a^I\oplus K_a^J$ shows that $\alpha_a$ is injective, while
	exactness at $K_a^{I+J}$ gives
	$
	\operatorname{Im}(\alpha_a)=\ker(\delta_a).
	$
	Consequently, restriction of the codomain of $\alpha_a$ yields an
	isomorphism
	\[
	K_a^I\oplus K_a^J \cong \ker(\delta_a).
	\]
	
	Now
	$
	a\in S_I\cap S_J
	$
	if and only if
	$
	K_a^I=K_a^J=0,
	$
	which is equivalent to
	$
	K_a^I\oplus K_a^J=0.
	$
	By the preceding isomorphism, this is equivalent to
	$
	\ker(\delta_a)=0.
	$
	By Remark~\ref{remkerdelta},
	\[
	\ker(\delta_a)
	=
	\frac{(I:a)+(J:a)}{I+J}.
	\]
	Therefore,
	$
	\ker(\delta_a)=0
	$
	if and only if
	\[
	(I:a)+(J:a)=I+J.
	\]
	This proves the equivalence.
\end{proof}

The next monomial calculation shows how the colon-sum criterion detects the failure of common regularity without requiring associated-prime computations for either quotient.

\begin{example}\label{monomialex}
Suppose $A=k[u,x,y]$. Let
\(
I=(ux,xy)\),  $J=(uy,xy)$, and $a=u$.
These are monomial ideals, and
$
I\cap J=(xy)$ and $(I\cap J:u)=(xy)$,
so $u\in S_{I\cap J}$. On the other hand,
$
(I:u)=(x)$, $(J:u)=(y)$, and
$(I:u)+(J:u)=(x,y)$,
whereas $I+J=(ux,uy,xy)$. Thanks to Proposition \ref{propeqfacts}, we can detect that $u\notin S_I\cap S_J$ by a simple comparison of explicitly generated ideals.
\end{example}

\begin{proposition}\label{propchain}
    Let $I,J$ be ideals of $A$. The following chain of inclusions holds:
    $$S_{I\cap J}\cap S_{I+J}
\subseteq S_I\cap S_J
\subseteq S_{I\cap J}.$$
In particular, if $I+J=A$ then 
$$S_{I} \cap S_J=S_{I\cap J}.$$
\end{proposition}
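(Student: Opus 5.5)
The plan is to prove the two inclusions separately, each by reading off exactness of the sequence in Theorem~\ref{thmMayerVietsequence}, and then specialize to the comaximal case.

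\emph{Second inclusion.} Let $a\in S_I\cap S_J$, so $K_a^I=K_a^J=0$ and hence $K_a^I\oplus K_a^J=0$. The map $\eta_a\colon K_a^{I\cap J}\to K_a^I\oplus K_a^J$ is injective by exactness at the first nonzero term. An injective map into the zero module forces $K_a^{I\cap J}=0$, that is, $a\in S_{I\cap J}$.

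A direct check is just as easy and could replace this. If $ac\in I\cap J$, then $c\in(I:a)=I$ and $c\in(J:a)=J$, so $c\in I\cap J$.

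\emph{First inclusion.} Let $a\in S_{I\cap J}\cap S_{I+J}$, so $K_a^{I\cap J}=0$ and $K_a^{I+J}=0$. The segment
\[
0\to K_a^{I\cap J}\to K_a^I\oplus K_a^J\to K_a^{I+J}
\]
is exact. Hence $K_a^I\oplus K_a^J$ embeds via $\alpha_a$ into $K_a^{I+J}=0$, with trivial kernel $\operatorname{Im}\eta_a=0$. Therefore $K_a^I=K_a^J=0$, which means $a\in S_I\cap S_J$.

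Alternatively, I could invoke Proposition~\ref{propeqfacts}. Since $(I+J:a)=I+J$, we have
\[
I+J\subseteq (I:a)+(J:a)\subseteq (I+J:a)=I+J,
\]
so condition (2) holds and (1) follows. I will present the snake-lemma version and mention this as a remark.

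\emph{Comaximal case.} If $I+J=A$, then $A/(I+J)=0$, so $K_a^{I+J}=0$ for every $a$ and $S_{I+J}=A$. The chain then reads $S_{I\cap J}\subseteq S_I\cap S_J\subseteq S_{I\cap J}$, which gives equality.

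There is no real obstacle here. The only point needing care is the convention that $S_A=A$, since the zero module has trivially injective multiplication; this should be stated explicitly.
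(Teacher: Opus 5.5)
Your proof is correct and takes essentially the same approach as the paper: you read both inclusions off exactness of the sequence in Theorem~\ref{thmMayerVietsequence} at $K_a^{I\cap J}$ and at $K_a^I\oplus K_a^J$, and you settle the comaximal case via $S_A=A$. Your side remarks, the direct colon-ideal check and the route through Proposition~\ref{propeqfacts}, are also valid.
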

\begin{proof}
    Let $a\in S_{I\cap J} \cap S_{I+J}$. Then $K_a^{I\cap J}=0$ and $K_a^{I+J}=0$. Hence the long exact sequence of Theorem \ref{thmMayerVietsequence} yields 
    $$0\, \aar{} 0 \aar{\eta_a} K_a^I \oplus K_a^J \aar{\alpha_a} 0$$
    which implies $K_a^I=K_a^J=0$ and hence $a\in S_I\cap S_J$. 

    Consider now $a\in S_I\cap S_J$. We then have $K_a^I\oplus K_a^J=0$ and so the long exact sequence of Theorem \ref{thmMayerVietsequence} yields $K_a^{I\cap J}=0$, from which we conclude $a\in S_{I \cap J}$.

    Finally, if $I+J=A$ then $S_{I+J}=A$ and so the equality $S_{I} \cap S_J=S_{I\cap J}$ follows immediately from the chain of inclusions. 
\end{proof}

The inclusions in Proposition~\ref{propchain} can both be strict. The following examples show what causes the failure of equality in each case.

\begin{example}
Take $A=k[x,y]$, $I=(x)$, $J=(y)$, and $a=x+y$. Since $I\cap J=(xy)$, the class of $a$ is regular modulo $I\cap J$: it belongs to neither associated prime $(x)$ nor $(y)$. Its classes modulo $I$ and $J$ are respectively $y$ and $x$, so $a\in S_I\cap S_J$. But $I+J=(x,y)$ and the class of $a$ in $A/(I+J)=k$ is zero, hence $a\notin S_{I+J}$. This makes the first inclusion of Proposition~\ref{propchain} strict.
\end{example}

\begin{example}
Consider the same ideals $I$ and $J$ from Example \ref{monomialex}. The polynomial $u\in k[u,x,y]$ is in $S_{I\cap J}\setminus(S_I\cap S_J)$, so the second inclusion of Proposition~\ref{propchain} can be strict.
\end{example}

\begin{remark}
    Proposition \ref{propchain} shows that the common regular denominators of $I$ and $J$ can be studied by examining the regular denominators of $I\cap J$ and $I+J$. In particular, if $I$ and $J$ are comaximal, their common regular denominators are precisely the regular denominators of the intersection $I\cap J$. This result is helpful in explicitly computing the common regular denominators both by hand and using a computer. 
\end{remark}

\section{The Noetherian case}\label{sec:noeth}
In this section we specialize to the Noetherian case. In this context, the purely algebraic Mayer-Vietoris calculus developed in Section~\ref{sec:mv} translates into the geometric framework of affine schemes. In this geometric context, tracking the regular denominators of an ideal is equivalent to tracking the functions that avoid the irreducible components and embedded subschemes of the space.

The following standard characterization (see, for instance, \cite[Theorem 6.1(ii), p. 38]{Mat86}) of the zero divisors on a finite module over a Noetherian ring as the union of its associated prime ideals will be useful for us.

\begin{theorem}\label{thmcharzerodiv}
Let $A$ be a Noetherian ring and let $M$ be a finite $A$-module. The set of zero divisors for $M$ is the union of its associated prime ideals.
\end{theorem}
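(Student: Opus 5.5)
We prove the two inclusions separately. Write $Z(M)=\{a\in A\mid \exists\, 0\neq m\in M,\ am=0\}$ for the zero divisors on $M$.

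\emph{Associated primes consist of zero divisors.} Let $\mathfrak p\in\Ass_A(M)$, so $\mathfrak p=\operatorname{ann}(m)$ for some $m\neq 0$. Every $a\in\mathfrak p$ satisfies $am=0$, hence $a\in Z(M)$. Thus $\bigcup_{\mathfrak p\in\Ass(M)}\mathfrak p\subseteq Z(M)$.

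\emph{Zero divisors lie in an associated prime.} Let $a\in Z(M)$ with $am=0$ and $m\neq 0$. Then the family
$$\mathcal F=\{\operatorname{ann}(x)\mid 0\neq x\in M\}$$
contains $\operatorname{ann}(m)\ni a$. Since $A$ is Noetherian, the subfamily of members containing $\operatorname{ann}(m)$ has a maximal element $\operatorname{ann}(x)$, and $a\in\operatorname{ann}(x)$.

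\emph{Maximal annihilators are prime.} This is the key step. Put $\mathfrak q=\operatorname{ann}(x)$; it is proper because $x\neq 0$. Suppose $bc\in\mathfrak q$ and $c\notin\mathfrak q$. Then $cx\neq 0$ and $\operatorname{ann}(cx)\supseteq\mathfrak q\supseteq\operatorname{ann}(m)$. By maximality, $\operatorname{ann}(cx)=\mathfrak q$. Since $b(cx)=(bc)x=0$, we get $b\in\operatorname{ann}(cx)=\mathfrak q$. Hence $\mathfrak q$ is prime, so $\mathfrak q\in\Ass_A(M)$ and $a\in\mathfrak q$.

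The two inclusions together give $Z(M)=\bigcup_{\mathfrak p\in\Ass(M)}\mathfrak p$.

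Finiteness of $M$ is not needed for this equality; only the Noetherian hypothesis on $A$ is used. Finite generation of $M$ matters only for the finiteness of $\Ass(M)$, which the paper will use later. If a source is required for that, one can cite \cite[Theorem 6.5]{Mat86}, which proves it via a prime filtration of $M$.
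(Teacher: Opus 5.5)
Your proof is correct and is essentially the paper's route: the paper gives no argument of its own and simply cites \cite[Theorem~6.1(ii)]{Mat86}, whose proof is exactly yours (a maximal member of $\{\operatorname{ann}(x)\mid 0\neq x\in M\}$ is prime, and every such annihilator lies in a maximal one). Your remark that finite generation of $M$ is not needed is also right, since Matsumura states the result for arbitrary $A$-modules.
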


The following quotient exhibits the role of an embedded associated prime in determining the complete zero-divisor set.

\begin{example}
Let $A=k[x,y]$ and $M=A/(x^2,xy)$. The decomposition
$
(x^2,xy)=(x)\cap(x^2,y)
$
shows that
$
\Ass_A(M)=\{(x),(x,y)\}.
$
The embedded prime $(x,y)$ can also be seen directly: the annihilator of the nonzero class of $x$ in $M$ is $(x,y)$. The theorem therefore gives the zero-divisor set
$
(x)\cup(x,y)=(x,y).
$
Thus an element such as $1+y$ is regular on $M$, whereas every element of the maximal ideal $(x,y)$ is a zero divisor.
\end{example}

The following characterization of regular denominator sets for Noetherian rings follows immediately from Theorem \ref{thmcharzerodiv}.
\begin{corollary}\label{corollcharS_INoeth}
    Let $I$ be a proper ideal in a Noetherian ring $A$. The following equalities hold:
    $$S_I = A \setminus \bigcup_{P \in \Ass_A(A/I)} P=A \setminus \bigcup_{P \in \MaxAss_A(A/I)}P.$$
\end{corollary}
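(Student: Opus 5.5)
The plan is to apply Theorem~\ref{thmcharzerodiv} to the finite $A$-module $M=A/I$. Then reduce the union over all associated primes to the union over the maximal ones.

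\emph{First equality.} Since $A$ is Noetherian, $A/I$ is a finite $A$-module (it is cyclic). It is nonzero because $I$ is proper, so $\Ass_A(A/I)$ is nonempty and finite. By definition, $a\in S_I$ if and only if $\mu_a^I$ is injective, that is, if and only if $a$ is not a zero divisor on $A/I$. Theorem~\ref{thmcharzerodiv} identifies the set of zero divisors on $A/I$ with $\bigcup_{P\in\Ass_A(A/I)}P$. Taking complements in $A$ gives $S_I=A\setminus\bigcup_{P\in\Ass_A(A/I)}P$.

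\emph{Second equality.} Here $\MaxAss_A(A/I)$ denotes the set of maximal elements of $\Ass_A(A/I)$ under inclusion. It suffices to show that
\[
\bigcup_{P\in\Ass_A(A/I)}P=\bigcup_{P\in\MaxAss_A(A/I)}P.
\]
The inclusion $\supseteq$ is trivial, since $\MaxAss_A(A/I)\subseteq\Ass_A(A/I)$. For $\subseteq$, take any $P\in\Ass_A(A/I)$. Because $\Ass_A(A/I)$ is finite (the standard Noetherian fact; see \cite[Theorem~6.5]{Mat86}), the chain of elements of $\Ass_A(A/I)$ containing $P$ cannot ascend forever. So $P$ is contained in some maximal element $Q\in\MaxAss_A(A/I)$, and hence $P\subseteq Q$. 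Taking complements yields the second equality.

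The only step needing care is the existence of maximal associated primes above each associated prime. Finiteness of $\Ass_A(A/I)$ settles it; alternatively one can use the ascending chain condition on ideals of $A$. Everything else is a direct translation of Theorem~\ref{thmcharzerodiv}.
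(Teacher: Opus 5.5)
Your proposal is correct and follows the paper's proof: the first equality comes from applying Theorem~\ref{thmcharzerodiv} to $A/I$ and taking complements, and the second from the equality of the union over all associated primes with the union over the maximal ones. You also justify that second step by noting that, since $\Ass_A(A/I)$ is finite, every associated prime lies in a maximal one; the paper treats this as clear.
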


\begin{proof}
    The first equality is given by Theorem \ref{thmcharzerodiv} applied to the $A$-module $A/I$. The second equality follows from the fact that the union of the associated primes is clearly equal to the union of the maximal associated primes.
\end{proof}

The next example shows how Corollary~\ref{corollcharS_INoeth} reduces the question of regularity to checking membership in finitely many prime ideals.

\begin{example}
For $A=k[x,y]$ and $I=(xy)$, we have
\[
\Ass_A(A/I)=\MaxAss_A(A/I)=\{(x),(y)\}.
\]
Corollary~\ref{corollcharS_INoeth} gives
\[
S_I=A\setminus\bigl((x)\cup(y)\bigr).
\]
For example, $x+y\in S_I$, while $x$, $y$, and $xy$ are excluded. In general, to check whether an element belongs to $S_I$, it suffices to check whether it is a multiple of $x$ or $y$.  
\end{example}

We now define an equivalence relation on the proper ideals of the ring $A$ that identifies the ideals with the same regular denominators.

\begin{definition}
    Let $I$ and $J$ be ideals of $A$. We say that $I$ is \emph{denominator equivalent} to $J$, and we write $I \sim J$, if $S_I=S_J$.  
\end{definition}

The relation $\sim$ is clearly an equivalence relation on the set of proper ideals of $A$. We will prove that, when $A$ is Noetherian, the denominator-equivalence classes of proper ideals are in bijection with the finite nonempty antichains of prime ideals of $A$.

\begin{proposition}
\label{propdeneqantich}
Let $A$ be a Noetherian ring. 
The assignment 
$$[I]\longmapsto\MaxAss_A(A/I)$$
gives a bijection between the denominator-equivalence classes of proper ideals of $A$ and the finite non-empty antichains in $\Spec(A)$. 
\end{proposition}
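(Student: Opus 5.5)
The plan is to show three things in turn: the assignment is well defined (and injective), its values are finite nonempty antichains, and it is surjective.

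\textbf{Values are antichains.} For a proper ideal $I$ of the Noetherian ring $A$, the set $\Ass_A(A/I)$ is finite and nonempty, since $A/I\neq 0$ is a finite module. Hence $\MaxAss_A(A/I)$, its set of maximal elements, is a finite nonempty antichain.

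\textbf{Well defined and injective.} By Corollary~\ref{corollcharS_INoeth},
\[
A\setminus S_I=\bigcup_{P\in\MaxAss_A(A/I)}P .
\]
So it suffices to prove that a finite antichain $\mathcal F$ of primes is determined by the union $U(\mathcal F)=\bigcup_{P\in\mathcal F}P$. I will show that $\mathcal F$ is exactly the set of primes $Q$ that are maximal among primes contained in $U(\mathcal F)$.
\begin{enumerate}
\item[$\bullet$] Let $Q$ be a prime with $Q\subseteq U(\mathcal F)$. By prime avoidance (the union is finite), $Q\subseteq P$ for some $P\in\mathcal F$.
\item[$\bullet$] Each $P\in\mathcal F$ is maximal among such primes. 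If $P\subseteq Q\subseteq U(\mathcal F)$, then $Q\subseteq P'$ for some $P'\in\mathcal F$. This gives $P\subseteq P'$, and the antichain condition forces $P=P'=Q$.
\item[$\bullet$] Conversely, a maximal such $Q$ lies in some $P\in\mathcal F$, and maximality gives $Q=P$.
\end{enumerate}
Therefore $S_I=S_J$ holds if and only if $\MaxAss_A(A/I)=\MaxAss_A(A/J)$. The forward implication gives well-definedness and the reverse gives injectivity.

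\textbf{Surjectivity.} Given a finite nonempty antichain $\{P_1,\dots,P_n\}$, take $I=P_1\cap\cdots\cap P_n$, which is proper. I need $\Ass_A(A/I)=\{P_1,\dots,P_n\}$. The natural map $A/I\hookrightarrow\bigoplus_i A/P_i$ is injective, so $\Ass_A(A/I)\subseteq\{P_1,\dots,P_n\}$. For the reverse inclusion, fix $i$ and choose $x\in\bigcap_{j\neq i}P_j\setminus P_i$; this is possible by the antichain condition together with primality of $P_i$.
\begin{enumerate}
\item[$\bullet$] If $y\in P_i$, then $xy\in P_i$, and $xy\in P_j$ for $j\neq i$ because $x\in P_j$. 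So $xy\in I$.
\item[$\bullet$] If $xy\in I$, then $xy\in P_i$ with $x\notin P_i$, so $y\in P_i$.
\end{enumerate}
Hence $(I:x)=P_i$, so $P_i\in\Ass_A(A/I)$. Since the $P_i$ form an antichain, $\MaxAss_A(A/I)=\{P_1,\dots,P_n\}$.

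The step needing the most care is injectivity, that is, recovering the antichain from the union of its members. It relies essentially on prime avoidance for a finite union of primes.
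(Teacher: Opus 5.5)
Your proof is correct and follows the same route as the paper. You use Corollary~\ref{corollcharS_INoeth} plus prime avoidance to show the antichain is determined by $A\setminus S_I$, and the radical ideal $\bigcap_{P\in\cP}P$ to realize every antichain; your direct computation $(I:x)=P_i$ together with the embedding $A/I\hookrightarrow\bigoplus_i A/P_i$ is a more self-contained check of $\MaxAss_A(A/I_{\cP})=\cP$ than the paper's appeal to minimal primes of radical ideals. One small slip in your closing remark: recovering the antichain from its union is what gives \emph{well-definedness}, whereas injectivity is the trivial direction that follows immediately from Corollary~\ref{corollcharS_INoeth}, as your own earlier sentence correctly says.
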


\begin{proof}
Thanks to Corollary \ref{corollcharS_INoeth}, we have $$S_I = A \setminus \bigcup_{P \in \MaxAss_A(A/I)}P.$$
Moreover, since $A$ is Noetherian, the set $\Ass_A(A/I)$ is finite (see \cite[Theorem 6.5(i), p. 39]{Mat86}) and hence also the set $\MaxAss_A(A/I)$ is finite.
Notice also that the set $\MaxAss_A(A/I)$ is an antichain in $\Spec(A)$ by definition.

Assume now $I \sim J$. Then clearly  $$\bigcup_{P \in \MaxAss_A(A/I)}P= \bigcup_{P \in \MaxAss_A(A/J)}P.$$
By the Prime Avoidance Lemma, since the sets $\MaxAss_A(A/I)$ and $\MaxAss_A(A/J)$ are antichains, we conclude that $\MaxAss_A(A/I)=\MaxAss_A(A/J)$ and hence the assignment is well-defined. To show that it gives a bijection, we construct its inverse. Given a finite non-empty antichain $\cP\subseteq\Spec(A)$, consider the ideal $I_{\cP}=\bigcap_{P\in\cP}P$. 
Since it is an intersection of prime ideals, $I_{\cP}$ is radical. Furthermore, the minimal primes over $I_{\cP}$ are exactly the elements of $\cP$. Indeed, any prime containing $I_{\cP}$ must contain one of the primes in $\cP$ and two primes in $\cP$ are never contained in each other because $\cP$ is an antichain. On the other hand, since $A$ is Noetherian, the minimal primes of the radical ideal $I_{\cP}$ are exactly its maximal associated primes (see \cite[Theorem 6.5(iii), p. 39]{Mat86}).
Hence $$\MaxAss_A(A/I_{\cP}) = \cP.$$ This proves that $I_{\cP}$ is the unique radical ideal in its denominator-equivalence class. We therefore define the inverse of the starting assignment as the function sending $\cP$ to the denominator equivalence class $[I_{\cP}]$ of the ideal $I_{\cP}$. The equality $\MaxAss_A(A/I_{\cP}) = \cP$ then shows  that both composites of the two assignments are identities.
\end{proof}

The next example shows that denominator-equivalent ideals can define quite different schemes. In particular, a nonreduced quotient may have the same regular denominators as a much simpler reduced one.

\begin{example}
In $A=k[x,y]$, the ideals
$
I=(xy)$ and $J=(x^2y^3)=(x^2)\cap(y^3)
$
have the same maximal associated-prime antichain $\{(x),(y)\}$. Proposition~\ref{propdeneqantich} therefore gives $I\sim J$, although $A/I$ is reduced and $A/J$ has nontrivial nilpotents. Their common denominator class is represented by the radical ideal
$
(x)\cap(y)=(xy).
$
Thus a denominator calculation for the nonreduced quotient $A/J$ may be replaced by the squarefree monomial quotient $A/(xy)$.
\end{example}

\begin{corollary}\label{corollradical}
    Let $I$ be a proper ideal in a Noetherian ring $A$. Then 
$$S_I=S_{\bigcap_{P\in\MaxAss_A(A/I)}P}.$$
\end{corollary}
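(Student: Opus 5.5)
The plan is to derive the corollary directly from Corollary~\ref{corollcharS_INoeth} together with the computation of $\MaxAss$ for the radical ideal $I_{\cP}$ carried out in the proof of Proposition~\ref{propdeneqantich}.

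Set $\cP=\MaxAss_A(A/I)$ and $I_{\cP}=\bigcap_{P\in\cP}P$. Since $I$ is proper and $A$ is Noetherian, $A/I\neq 0$, so $\Ass_A(A/I)$ is nonempty and finite. Hence $\cP$ is a finite nonempty antichain in $\Spec(A)$, and $I_{\cP}$ is a proper ideal.

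The key step, already established in the proof of Proposition~\ref{propdeneqantich}, is the equality $\MaxAss_A(A/I_{\cP})=\cP$. I will recall why it holds. The ideal $I_{\cP}$ is radical. Its minimal primes are exactly the elements of $\cP$: any prime containing the finite intersection contains some member of $\cP$, and no two members of $\cP$ are comparable. Finally, for a radical ideal in a Noetherian ring, the associated primes are exactly the minimal primes, so the maximal associated primes are $\cP$.

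Now apply Corollary~\ref{corollcharS_INoeth} to both $I$ and $I_{\cP}$:
\[
S_I=A\setminus\bigcup_{P\in\cP}P=A\setminus\bigcup_{P\in\MaxAss_A(A/I_{\cP})}P=S_{I_{\cP}}.
\]
This is the claimed equality. Equivalently, $I\sim I_{\cP}$ follows from the bijection of Proposition~\ref{propdeneqantich}, since both ideals are sent to the same antichain.

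I do not expect a serious obstacle. The only points needing care are that $\cP$ is nonempty, so that $I_{\cP}$ is proper and Corollary~\ref{corollcharS_INoeth} applies, and the identification of the maximal associated primes of a radical ideal with its minimal primes. Both are standard and were already used in the proof of Proposition~\ref{propdeneqantich}.
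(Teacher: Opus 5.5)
Your proposal is correct and follows the paper's route: the paper's proof simply cites the equality $\MaxAss_A(A/I_{\cP})=\cP$ from the proof of Proposition~\ref{propdeneqantich}, which combined with Corollary~\ref{corollcharS_INoeth} gives $S_I=S_{I_{\cP}}$, just as you argue. Your check that $\cP$ is nonempty, so that $I_{\cP}$ is proper and the corollary applies, fills in a detail the paper leaves implicit.
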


\begin{proof}
    It immediately follows from the proof of Proposition \ref{propdeneqantich}.
\end{proof}

The next example shows how the canonical radical representative from Corollary~\ref{corollradical} can differ from the ordinary radical, with the difference coming from an embedded associated prime.

\begin{example}
Consider $A=k[x,y]$ and $I=(x^2,xy)$. Here
\[
\Ass_A(A/I)=\{(x),(x,y)\}\quad \text{and}\quad
\MaxAss_A(A/I)=\{(x,y)\}.
\]
Consequently, Corollary~\ref{corollradical} yields
$
S_I=S_{(x,y)},
$
whereas $\sqrt I=(x)$. The regular-denominator computation is therefore reduced to the maximal-ideal quotient $A/(x,y)=k$, and this example also displays the effect of the embedded prime.
\end{example}

\begin{remark}\label{rem:den-radical}
Notice that the unique radical representative $I_{\mathcal{P}}$ need not coincide with the  usual radical $\sqrt{I}$. The classical radical $\sqrt{I}$  is the intersection of the minimal primes of $I$. In contrast, the ideal $I_{\mathcal{P}}$ is determined by the maximal associated primes. Because every minimal prime is contained in a maximal associated prime, we always have a structural inclusion $\sqrt{I} \subseteq I_{\mathcal{P}}$, with equality holding if and only if $A/I$ has no embedded primes. This algebraically formalizes the fact that regular denominators are deeply sensitive to embedded components, while the topological support is not.
\end{remark}

The following result provides a bound for the associated primes of the intersection of two ideals. Interestingly, the result can be derived using the same standard short exact sequence that we used in the proof of Theorem \ref{thmMayerVietsequence}.

\begin{proposition}
\label{propass}
Let $A$ be a Noetherian ring and let $I$ and $J$ be ideals of $A$. The following chain of inclusions holds:
\begin{align*}
&\bigl(\Ass_A(A/I)\cup\Ass_A(A/J)\bigr)
\setminus\Ass_A(A/(I+J))
\subseteq\Ass_A(A/(I\cap J))
\subseteq\Ass_A(A/I)\cup\Ass_A(A/J).
\end{align*}
\end{proposition}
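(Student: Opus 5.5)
We will use the short exact sequence
\[
0\longrightarrow A/(I\cap J)\longrightarrow A/I\oplus A/J\longrightarrow A/(I+J)\longrightarrow 0
\]
from the proof of Theorem~\ref{thmMayerVietsequence}, together with two standard facts about associated primes of modules over a Noetherian ring (see \cite[Theorem 6.3, p.~38]{Mat86}). First, $\Ass_A(M_1\oplus M_2)=\Ass_A(M_1)\cup\Ass_A(M_2)$. Second, for a short exact sequence $0\to M'\to M\to M''\to 0$ we have
\[
\Ass_A(M')\subseteq\Ass_A(M)\subseteq\Ass_A(M')\cup\Ass_A(M'').
\]
These facts are not stated earlier in the paper, so we will quote them from the standard reference. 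The plan is simply to apply both halves of the second fact to the Mayer--Vietoris sequence.

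\emph{Right-hand inclusion.} Take $M'=A/(I\cap J)$ and $M=A/I\oplus A/J$ in the first half of the second fact. This gives
\[
\Ass_A(A/(I\cap J))\subseteq\Ass_A(A/I\oplus A/J)=\Ass_A(A/I)\cup\Ass_A(A/J).
\]
For completeness we may also argue directly. If $P=\operatorname{ann}(x+(I\cap J))$, then its image $(x+I,x+J)$ in the direct sum is injective, so $P$ is the annihilator of an element of $A/I\oplus A/J$. One then uses that the associated primes of a direct sum lie in the union of those of the summands.

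\emph{Left-hand inclusion.} The second half of the second fact, applied to the same sequence, gives
\[
\Ass_A(A/I)\cup\Ass_A(A/J)\subseteq\Ass_A(A/(I\cap J))\cup\Ass_A(A/(I+J)).
\]
Removing $\Ass_A(A/(I+J))$ from both sides yields the claim.

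The only real content is the inclusion $\Ass_A(M)\subseteq\Ass_A(M')\cup\Ass_A(M'')$, which I would recall rather than reprove. Its proof runs as follows. Let $P\in\Ass_A(M)$ and choose an injection $A/P\hookrightarrow M$ with image $N$. If $N\cap M'\neq 0$, then any nonzero element of $N\cap M'$ has annihilator $P$, because $A/P$ is a domain; hence $P\in\Ass_A(M')$. Otherwise $N$ embeds into $M''$, and so $P\in\Ass_A(M'')$. The Noetherian hypothesis is not actually needed for these two inclusions.
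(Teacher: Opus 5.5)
Your proof is correct and is essentially the paper's argument: both apply the two halves of \cite[Theorem~6.3]{Mat86} to the sequence $0\to A/(I\cap J)\to A/I\oplus A/J\to A/(I+J)\to 0$, together with $\Ass_A(M_1\oplus M_2)=\Ass_A(M_1)\cup\Ass_A(M_2)$. Your extra sketch of $\Ass_A(M)\subseteq\Ass_A(M')\cup\Ass_A(M'')$ is sound, and you are right that the Noetherian hypothesis is not needed for these inclusions.
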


\begin{proof}
    Consider once again the short exact sequence 
    $$0\, \aar{} A/(I\cap J) \aar{\delta} {A/I \oplus A/J} \aar{\sigma} A/(I+J)\, \aar{} 0$$
    with $\delta(x+(I\cap J))=(x+I, x+J)$ and $\sigma (y+I,z+J)=(y-z)+(I+J)$. 
Thanks to a standard result in commutative algebra (\cite[Theorem~6.3, p.~38]{Mat86}), given a short exact sequence $0 \to M' \to M \to M'' \to 0$, there are inclusions $$\Ass_A(M') \subseteq \Ass_A(M) \hspace{2ex} \text{ and } \hspace{2ex}\Ass_A(M) \subseteq \Ass_A(M') \cup \Ass_A(M'').$$ Applying the first inclusion to the previous short exact sequence, we obtain $$\Ass_A \left( A/(I\cap J)\right) \subseteq \Ass_A\left(A/I \oplus A/J\right).$$
Since the associated primes of a direct sum are the union of the individual associated primes (see also \cite[Theorem~6.3, p.~38]{Mat86}), we obtain the inclusion

$$\Ass_A(A/(I\cap J))
\subseteq\Ass_A(A/I)\cup\Ass_A(A/J).$$
Applying the second inclusion to our short exact sequence gives
$$\left( \Ass_A\left(A/I\right) \cup \Ass_A\left(A/J\right) \right) \setminus \Ass_A\left(A/(I+J)\right) \subseteq \Ass_A\left(A/(I \cap J)\right)$$
which is the first inclusion of the chain.
\end{proof}

The next example shows that both bounds in Proposition~\ref{propass} can be attained simultaneously, even for two simple coordinate ideals.

\begin{example}
Let $A=k[x,y]$, $I=(x)$, and $J=(y)$. Then $\Ass_A(A/I)=\{(x)\}$ and $\Ass_A(A/J)=\{(y)\}$. Moreover,
$\Ass_A(A/(I+J))=\{(x,y)\}$ and $\Ass_A(A/(I\cap J))=\{(x),(y)\}$. Hence both inclusions in Proposition~\ref{propass} are equalities. In this case, the associated primes of the intersection are recovered from the two prime quotients and the sum, without decomposing $(xy)$.
\end{example}

We now restrict our focus to a Noetherian local ring $(A, \mathcal{M})$ and assume the ideal $I \cap J + aA$ is $\mathcal{M}$-primary. Under this condition, the modules in the exact sequence of Theorem \ref{thmMayerVietsequence} become Artinian. This allows us to derive the following alternating sum formula.

\begin{proposition}
\label{propformula}
Let $(A,\mathcal{M})$ be a Noetherian local ring, let $I$ and $J$ be ideals of $A$ and let $a\in A$. If $(I\cap J)+aA$ is $\mathcal{M}$-primary, the following formula holds:
\begin{align*}
\length_A (K^{I \cap J}_a)
-\length_A (K^{I}_a)
-\length_A (K^{J}_a)
+\length_A (K^{I+J}_a)\\
&\hspace{-10em}=
\length_A (C^{I \cap J}_a)
-\length_A (C^{I}_a)
-\length_A (C^{J}_a)
+\length_A (C^{I+J}_a).	
\end{align*}
\end{proposition}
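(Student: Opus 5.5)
The plan is to apply additivity of length to the six-term exact sequence of Theorem~\ref{thmMayerVietsequence}. There are two steps: first show that all eight modules in that sequence have finite length, then take the alternating sum of lengths.

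\textbf{Step 1: finite length.} Set $N:=(I\cap J)+aA$. Every module in the sequence is a subquotient of some $A/I$, $A/J$, $A/(I\cap J)$ or $A/(I+J)$. Since $A$ is Noetherian, each of them is therefore finitely generated. I will check that each one is annihilated by $N$.
\begin{itemize}
\item For the cokernels, $C_a^{I\cap J}\cong A/N$ directly. The modules $C_a^I\cong A/(I+aA)$, $C_a^J\cong A/(J+aA)$ and $C_a^{I+J}\cong A/(I+J+aA)$ are quotients of $A/N$, because $I$, $J$ and $I+J$ all contain $I\cap J$.
\item For the kernels, $K_a^{L}=(L:a)/L$ is killed by $a$, since $a(L:a)\subseteq L$, and it is killed by $L$. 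Taking $L\in\{I\cap J,\,I,\,J,\,I+J\}$, each such $L$ contains $I\cap J$, so each $K_a^L$ is killed by $N$.
\end{itemize}
Hence all eight modules are finitely generated modules over $A/N$. Since $N$ is $\mathcal M$-primary, $A/N$ is an Artinian local ring, so every finitely generated $A/N$-module has finite length. Finally, $A$-length equals $A/N$-length for such modules, because the $A$-submodules and the $A/N$-submodules coincide.

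\textbf{Step 2: the alternating sum.} For a finite exact sequence of finite-length modules, the alternating sum of lengths vanishes. This follows by splitting the sequence into short exact sequences and using additivity of length on each. Together with $\length(X\oplus Y)=\length X+\length Y$, the exact sequence of Theorem~\ref{thmMayerVietsequence} gives
\[
\length K_a^{I\cap J}-\bigl(\length K_a^I+\length K_a^J\bigr)+\length K_a^{I+J}-\length C_a^{I\cap J}+\bigl(\length C_a^I+\length C_a^J\bigr)-\length C_a^{I+J}=0.
\]
Moving the four cokernel terms to the other side yields exactly the stated formula.

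The only point needing care is the finiteness of the kernel lengths, which is why I check that $N$ annihilates each $K_a^L$ rather than merely noting that $K_a^L$ is a submodule of $A/L$. Once that is in place, the argument is a routine application of additivity.
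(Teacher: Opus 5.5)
Your proposal is correct and follows essentially the same route as the paper: you show all eight modules have finite length because they are finitely generated and killed by the $\mathcal M$-primary ideal $(I\cap J)+aA$, then apply additivity of length to the exact sequence of Theorem~\ref{thmMayerVietsequence}. The paper phrases the kernel step via $\mathcal M^n\subseteq (I\cap J)+aA$ annihilating $K_a^{I\cap J}$ rather than via the Artinian ring $A/N$, which is the same argument.
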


\begin{proof}
We first prove that all the modules involved in the formula have finite length. Since $(I \cap J) + aA$ is $\mathcal{M}$-primary, the quotient ring $A / ((I \cap J) + aA)$ is an Artinian ring. Thus, $C_a^{I \cap J}$ has finite length as an $A$-module. Moreover, the modules $C_a^I$, $C_a^J$ and $C_a^{I+J}$ are quotients of $C_a^{I \cap J}$ and so they also have finite length. On the other hand, since $(I \cap J) + aA$ is $\mathcal{M}$-primary, there exists an integer $n > 0$ such that $\mathcal{M}^n \subseteq (I \cap J) + aA$. As a consequence, $\mathcal{M}^n$ annihilates the module $K_a^{I \cap J}$. Hence, the module $K_a^{I \cap J}$ has finite length. Analogously, we conclude that also $K_a^I$, $K_a^J$ and $K_a^{I+J}$ have finite length since $I\cap J$ is contained in $I$, in $J$ and in $I+J$.

The alternating sum formula is then given by the additivity of length on finite exact sequences of modules (see \cite[Proposition~6.9, p.~78]{AM69}) applied to the Mayer-Vietoris exact sequence of Theorem \ref{thmMayerVietsequence}. 
\end{proof}

The alternating length identity can be checked completely in a small complete local example, reducing the eight module terms to explicit integers.

\begin{example}
Let $A=k\llbracket x,y\rrbracket$, $\mathcal M=(x,y)$, $I=(x)$, $J=(y)$, and $a=x+y$. Then $I\cap J=(xy)$ and
\[
A/((I\cap J)+aA)\cong k\llbracket x\rrbracket/(x^2),
\]
so the hypothesis of Proposition~\ref{propformula} holds. Multiplication by $a$ is injective modulo $(xy)$, $(x)$, and $(y)$, while it is the zero map on $A/(x,y)$. Hence the four kernel lengths are $0,0,0,1$. The four cokernel lengths are respectively $2,1,1,1$. Both alternating sums are therefore
\[
0-0-0+1=2-1-1+1=1.
\]
The formula reduces the compatibility of eight finite-length modules to this short numerical identity.
\end{example}

\section{Base change and norm loci}\label{sec:basechange}
In this section, we shift our focus to the geometric behavior of the regular denominator sets under base change. For this, we will need to consider an $A$-algebra $B$ that represents a family of spaces living over the base space $\Spec(A)$.

The following proposition describes the behavior of regular-denominator sets under base change along a flat ring homomorphism $f\colon A\to B$.

\begin{proposition}
    Let $f\colon A \rightarrow B$ be a flat ring homomorphism and let $I$ be an ideal of $A$. Then $f(S_I) \subseteq S_{f(I)B}$. Moreover, if $f$ is faithfully flat, then for every $a \in A$, we have that $a \in S_I$ if and only if $f(a) \in S_{f(I)B}$. 
\end{proposition}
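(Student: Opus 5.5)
The plan is to tensor the multiplication map with $B$ over $A$ and use that flatness preserves kernels. Fix $a\in A$ and consider the exact sequence of $A$-modules
\[
0\longrightarrow K_a^I \longrightarrow A/I \xrightarrow{\mu_a^I} A/I.
\]
Applying the functor $-\otimes_A B$, which is exact because $f$ is flat, we obtain an exact sequence
\[
0\longrightarrow K_a^I\otimes_A B \longrightarrow (A/I)\otimes_A B \longrightarrow (A/I)\otimes_A B.
\]

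Next we identify the terms. There is the standard isomorphism $(A/I)\otimes_A B\cong B/f(I)B$, sending $(x+I)\otimes b$ to $f(x)b+f(I)B$. Under this isomorphism, $\mu_a^I\otimes \mathrm{id}_B$ becomes multiplication by $f(a)$ on $B/f(I)B$, that is, $\mu_{f(a)}^{f(I)B}$. One checks this on elementary tensors:
\[
(ax+I)\otimes b\;\mapsto\; f(a)f(x)b.
\]
Exactness of the tensored sequence then gives
\[
K_{f(a)}^{f(I)B}\cong K_a^I\otimes_A B.
\]
This is the key identity, and I expect the only real care to be needed here, in verifying that the isomorphism intertwines the two multiplication maps. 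Everything else is formal.

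The first claim follows at once. If $a\in S_I$, then $K_a^I=0$, so $K_{f(a)}^{f(I)B}=0$, that is, $f(a)\in S_{f(I)B}$.

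For the faithfully flat case, the forward direction is already covered by the first claim. For the converse, suppose $f(a)\in S_{f(I)B}$. Then $K_a^I\otimes_A B=0$. Since $B$ is faithfully flat over $A$, a module $M$ satisfies $M\otimes_A B=0$ only if $M=0$ (Matsumura, Theorem 7.2). Hence $K_a^I=0$, and so $a\in S_I$.
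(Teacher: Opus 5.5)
Your proof is correct and follows the paper's approach. You tensor $\mu_a^I$ with $B$, identify the result with $\mu_{f(a)}^{f(I)B}$ on $B/f(I)B$, and use flatness, respectively faithful flatness, to transfer and reflect injectivity. The only cosmetic difference is that you track the kernel explicitly, obtaining $K_{f(a)}^{f(I)B}\cong K_a^I\otimes_A B$ and then using the criterion $M\otimes_A B=0\Rightarrow M=0$, whereas the paper uses directly that faithfully flat base change reflects exactness of $0\to A/I\to A/I$.
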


\begin{proof}
Given $a\in S_I$, by definition of $S_I$, we have that the following exact sequence of $A$-modules:
  \begin{equation}\label{es1}
      0 \longrightarrow A/I \xrightarrow{\mu^{I}_a} A/I.
  \end{equation}
  Since $B$ is flat over $A$, tensoring with $B$ gives the exact sequence 
  \begin{equation}\label{es2}
  0 \longrightarrow (A/I) \otimes_A B \xrightarrow{\mu^I_a \otimes 1_B} (A/I) \otimes_A B.
  \end{equation}
  Under the canonical isomorphism $(A/I) \otimes_A B \cong B/f(I)B$, the map $\mu^{I}_a \otimes 1_B$ becomes the multiplication map $\mu^{f(I)B}_{f(a)}$ by $f(a)$ in $B$. Therefore, $\mu^{f(I)B}_{f(a)}$ is injective, which means $f(a)\in S_{f(I)B}$.

 Now assume that $f$ is faithfully flat. This means that the sequence (\ref{es1}) is exact if and only if the sequence (\ref{es2}) is exact. This means that $a \in S_I$ if and only if $f(a) \in S_{f(I)B}$.
\end{proof}

The difference between what flatness preserves and what faithful flatness can recover already appears in a very simple example over $\mathds{Z}$.

\begin{example}
For the faithfully flat extension $\mathds Z\to\mathds Z[t]$ and $I=(6)$, multiplication by $5$ is injective on both $\mathds Z/6\mathds Z$ and $(\mathds Z/6\mathds Z)[t]$, while multiplication by $2$ is noninjective on both. Thus regularity is preserved and reflected. In contrast, let $\mathds Z\to\mathds Z[1/2]$ be the flat, nonfaithful localization, take $I=(2)$, and take $a=2$. Multiplication by $2$ on $\mathds Z/2\mathds Z$ is not injective, but $I\mathds Z[1/2]=\mathds Z[1/2]$, so the base-changed quotient is zero and every multiplication map on it is injective. Flatness alone therefore cannot give the converse.
\end{example}

We now consider an ideal $I$ of the $A$-algebra $B$ such that the quotient $B/I$ is a finite locally free module over $A$. In this setting, we can guarantee that, for every $b\in B$, the set of prime ideals $P$ of the base ring $A$ for which the image of $b$ after base change to the fiber ring $B\otimes_A (A_P/PA_P)$ is a regular denominator with respect to the extension of $I$ is an open set in the Zariski topology. In particular, it is the basic open set associated to the determinant of multiplication by $b$.

\begin{proposition}\label{propdet}
Let $I\subseteq B$ be an ideal such that $B/I$ is a finite locally free module over $A$ and let $b\in B$. Consider the fiberwise regular-denominator locus:

$$U_{I}(b):=\{P\in \mathrm{Spec}(A)\mid b\otimes1\in S_{I(B\otimes_A (A_P/PA_P))}\},$$ 
where $b\otimes 1$ is the image of $b$ under base change to the fiber ring $B\otimes_A (A_P/PA_P)$.
Then $U_{I}(b)$ is open in $\Spec(A)$. In particular, 
$$U_{I}(b)=D(\operatorname{det}(\mu_b^I)).$$
\end{proposition}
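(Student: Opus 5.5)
The plan is to reduce the question fiberwise to linear algebra over the residue field $\kappa(P)=A_P/PA_P$, and then to use that the determinant of an endomorphism of a finite locally free module commutes with base change. First, since $B/I$ is finite locally free over $A$, the endomorphism $\mu_b^I$ of the $A$-module $B/I$ has a well-defined determinant $\det(\mu_b^I)\in A$. To define it, choose a Zariski cover of $\Spec(A)$ by basic opens $D(f_i)$ over which $(B/I)_{f_i}$ is free of some rank $n_i$. Take the usual determinant of the matrix of $\mu_b^I$ over each $A_{f_i}$, and glue; these local determinants are independent of the basis and agree on overlaps. The standard property we will use is compatibility with base change: for any ring map $A\to A'$, the determinant of $\mu_b^I\otimes 1_{A'}$ on $(B/I)\otimes_A A'$ is the image of $\det(\mu_b^I)$ in $A'$. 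This is checked on a trivializing cover, where it is just functoriality of determinants of matrices.

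Next, fix $P\in\Spec(A)$ and put $\kappa=\kappa(P)$. Right exactness of the tensor product gives
\[
B\otimes_A\kappa \big/ I(B\otimes_A\kappa)\;\cong\;(B/I)\otimes_A\kappa .
\]
Under this isomorphism, multiplication by $b\otimes 1$ on the left-hand side corresponds to $\mu_b^I\otimes 1_\kappa$. This is the same identification as in the proof of the flat base change proposition above, but here it only needs right exactness. Hence $P\in U_I(b)$ if and only if $\mu_b^I\otimes 1_\kappa$ is injective on $(B/I)\otimes_A\kappa$. This is a finite-dimensional $\kappa$-vector space, since $B/I$ is finite locally free. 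An endomorphism of a finite-dimensional vector space is injective exactly when its determinant is nonzero. By the base change property of the determinant, this determinant is the image of $\det(\mu_b^I)$ in $\kappa$. That image is nonzero exactly when $\det(\mu_b^I)\notin P$, that is, when $P\in D(\det(\mu_b^I))$. Therefore $U_I(b)=D(\det(\mu_b^I))$, which is a basic open set and in particular open.

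The main obstacle is the first step: making the determinant well-defined globally when the rank of $B/I$ is only locally constant, and proving its compatibility with base change. One edge case needs attention. If the fiber is zero (rank $0$ near $P$), then the determinant is $1$ by convention, and multiplication on the zero space is trivially injective, so the equivalence still holds. To handle the general case, decompose $\Spec(A)$ into the clopen pieces where the rank is constant; on each piece the determinant is defined by local trivializations, and then glued using the sheaf property of $\mathcal O_{\Spec A}$. I would cite \cite{KM76,Fer98} for this construction rather than redo it. Everything after that step is routine linear algebra over a field.
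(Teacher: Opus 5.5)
Your proposal is correct and follows the paper's proof essentially step by step. You identify the fiber quotient with $(B/I)\otimes_A\kappa(P)$ by right exactness, reduce injectivity of $\mu_b^I\otimes 1$ on this finite-dimensional vector space to nonvanishing of its determinant, and use that this determinant is the image of $\det(\mu_b^I)$ in $\kappa(P)$; your extra remarks on gluing the determinant when the rank is only locally constant, its compatibility with base change, and the rank-zero case spell out what the paper uses tacitly.
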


\begin{proof}
    First, we observe that the quotient module $(B\otimes_A (A_P/PA_P))/I(B\otimes_A (A_P/PA_P))$ is canonically isomorphic to the module 
  $(B/I) \otimes_{A} (A_P/PA_P)$. This is because applying the right exact functor of tensor product  $-\otimes_{A} (A_P/PA_P)$ to the short exact sequence
  $$0 \longrightarrow I \longrightarrow B \longrightarrow B/I \longrightarrow 0$$
  yields the exact sequence
  $$I \otimes_{A} (A_P/PA_P) \longrightarrow B \otimes_{A} (A_P/PA_P) \longrightarrow (B/I) \otimes_{A}  (A_P/PA_P)  \longrightarrow 0.$$

Since $B/I$ is a finite locally free $A$-module, its base change $(B/I) \otimes_{A} (A_P/PA_P)$ to the field $A_P/PA_P$ is a finite-dimensional vector space over $A_P/PA_P$. Let us call this finite-dimensional vector space $V_{P}$. The condition $b\otimes 1 \in S_{I(B\otimes_A (A_P/PA_P))}$ then exactly means that the $A_P/PA_P$-linear map 
  $$\mu_{b}^I \otimes 1 \colon V_{P} \longrightarrow V_{P}$$is injective. But this happens if and only if its determinant is non-zero.
  Therefore, we have shown that $b\otimes 1 \in S_{I(B\otimes_A (A_P/PA_P))}$ if and only if  $\det(\mu_{b}^I \otimes 1) \neq 0$. Moreover, the determinant of $\mu_{b}^I \otimes 1$ is exactly the class of the global determinant $\det(\mu_{b}^I)$ inside the quotient $A_P/PA_P$. Hence, $\det(\mu_{b}^I \otimes 1) \neq 0$ precisely when $\det(\mu_{b}^I)$ is not in $P$. Thus, we get the desired conclusion by definition of the principal open sets of the Zariski topology.
\end{proof}

The next rank-two family replaces a separate regularity test in every fiber by the computation of one small multiplication determinant.

\begin{example}
Let $A=k[t]$, $B=A[x]$, $I=(x^2-t)$, and $b=x$. The quotient $B/I$ is free over $A$ with basis $1,x$, and multiplication by $x$ has matrix
\[
\begin{pmatrix}0&t\\1&0\end{pmatrix}.
\]
Its determinant is $-t$, so Proposition~\ref{propdet} gives
\[
U_I(x)=D(t)\subseteq\Spec(k[t]).
\]
At the fiber $t=0$, the class of $x$ is a nonzero nilpotent and multiplication is not injective; on every fiber with $t\ne0$, the determinant is nonzero. The exceptional locus is found from one scalar rather than by testing every fiber separately.
\end{example}

The following result shows that, in the presence of finite locally free modules over $A$, a multiplicative formula for determinants holds. Consequently, by Proposition \ref{propdet}, the intersection of the geometric loci where a certain element of the algebra is a regular denominator for two ideals $I$ and $J$ is equal to the intersection of the loci for $I\cap J$ and $I+J$. 

\begin{theorem}\label{detthm}
    Let $A$ be a ring, let $B$ be an $A$-algebra, and let $I,J\subseteq B$ be ideals such that $B/I,B/J,B/(I\cap J)$ and $B/(I+J)$ are finite locally free over $A$. Then for every $b\in B$, the following formula holds:
\begin{equation*}
\det(\mu_b^I)\det(\mu_b^J)=\det(\mu_b^{I\cap J})\det(\mu_b^{I+J}).
\end{equation*}
Consequently,
\begin{equation*}
\label{eq:family-open-equality}
U_I(b)\cap U_J(b)=U_{I\cap J}(b)\cap U_{I+J}(b).
\end{equation*}
\end{theorem}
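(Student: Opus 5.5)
The natural route is multiplicativity of determinants on short exact sequences of finite locally free modules. We use the short exact sequence of $B$-modules, hence of $A$-modules,
\[
0\longrightarrow B/(I\cap J)\xrightarrow{\ \delta\ } B/I\oplus B/J\xrightarrow{\ \sigma\ } B/(I+J)\longrightarrow 0,
\]
already used in the proof of Theorem~\ref{thmMayerVietsequence}. Multiplication by $b$ gives a commuting endomorphism of this sequence, namely $\mu_b^{I\cap J}$, $\mu_b^I\oplus\mu_b^J$ and $\mu_b^{I+J}$. All three terms are finite locally free $A$-modules by hypothesis; the middle term is a direct sum of two such modules. We will show that
\[
\det(\mu_b^I\oplus\mu_b^J)=\det(\mu_b^{I\cap J})\det(\mu_b^{I+J}).
\]
Separately, $\det(\mu_b^I\oplus\mu_b^J)=\det(\mu_b^I)\det(\mu_b^J)$, since the determinant of a direct sum of endomorphisms is the product of the determinants. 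Together these give the formula.

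To prove the displayed identity, recall that the determinant of an endomorphism of a finite locally free module is defined locally. Concretely, it is the unique element of $A$ whose image in $A_f$ equals the usual matrix determinant whenever the module becomes free over $A_f$. Since $A$ is a sheaf on $\Spec(A)$, it suffices to check the equality after localizing at each $f$ in a finite covering family of principal opens. Choose $f$ so that $B/(I\cap J)$ and $B/(I+J)$ are both free over $A_f$; this is possible by refining coverings. Then the sequence splits over $A_f$, since the quotient is projective. Choose a basis of $(B/(I\cap J))_f$ and lift a basis of $(B/(I+J))_f$ via a section of $\sigma$. Together these form a basis of the middle term, which is therefore free of the correct rank. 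Because $\delta$ and $\sigma$ commute with multiplication by $b$, the matrix of $\mu_b^I\oplus\mu_b^J$ in this basis is block upper triangular. Its diagonal blocks are the matrices of $\mu_b^{I\cap J}$ and $\mu_b^{I+J}$. So the determinant factors over $A_f$, and hence over $A$. This step is the main point requiring care. One must confirm that the determinant is independent of the chosen basis and compatible with localization, and that the locally defined determinant of the middle term agrees with the one computed from this adapted basis. Both facts are standard properties of determinants of finite locally free modules, in the sense of \cite{KM76,Fer98}.

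The consequence follows from Proposition~\ref{propdet}, which gives $U_K(b)=D(\det(\mu_b^K))$ for each of the four ideals $K$. Since $D(xy)=D(x)\cap D(y)$, the multiplicative formula gives
\[
U_I(b)\cap U_J(b)=D\bigl(\det(\mu_b^I)\det(\mu_b^J)\bigr)=D\bigl(\det(\mu_b^{I\cap J})\det(\mu_b^{I+J})\bigr)=U_{I\cap J}(b)\cap U_{I+J}(b).
\]
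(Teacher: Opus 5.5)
Your proof is correct and follows the same route as the paper: the short exact sequence $0\to B/(I\cap J)\to B/I\oplus B/J\to B/(I+J)\to 0$, multiplicativity of determinants along it and over the direct sum, and then Proposition~\ref{propdet} together with $D(xy)=D(x)\cap D(y)$. The only difference is that you prove the multiplicativity on short exact sequences yourself (splitting over a common trivializing cover and reading off a block upper triangular matrix), whereas the paper simply cites \cite[\S1]{KM76}.
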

\begin{proof}
    Consider the following well-known short exact sequence (see, for instance, 
    \cite[Chapter~II, \S1(7), Proposition~10]{Bou89})
    $$0\, \aar{} B/(I\cap J) \aar{\delta} {B/I \oplus B/J} \aar{\sigma} B/(I+J)\, \aar{} 0$$
    with $\delta(x+(I\cap J))=(x+I, x+J)$ and $\sigma (y+I,z+J)=(y-z)+(I+J)$. Since all the involved modules are finite locally free over $A$ by hypothesis, we apply the multiplicativity of determinants to the compatible multiplication endomorphisms of this short exact sequence. Since the determinant of the endomorphism on the middle term is the product of the determinants of the endomorphisms on the outer terms (see \cite[\S1]{KM76}) and (for the same reason) the determinant on a direct sum is the product of the determinants, the multiplicative formula for determinants is proved. The equality  
    $$U_I(b)\cap U_J(b)=U_{I\cap J}(b)\cap U_{I+J}(b)$$ 
    then follows immediately from Proposition \ref{propdet}.
\end{proof}

For a product algebra, the determinant identity is especially easy to see: everything reduces to a diagonal computation, and both the product formula and the corresponding equality of regular loci become immediate.

\begin{example}
Let $B=A\times A$, let $I=A\times0$, $J=0\times A$, and let $b=(r,s)$. Then
$
B/I\cong A$, $B/J\cong A$, $B/(I\cap J)\cong A^2$, and
$ B/(I+J)=0$.
The four determinants are $s$, $r$, $rs$, and $1$, respectively. The determinant formula of Theorem \ref{detthm} then becomes the identity $sr=(rs)\cdot1$, and the equality of loci becomes $D(s)\cap D(r)=D(rs)\cap\Spec(A)$. This simply says that a diagonal map is invertible exactly where both diagonal entries are invertible.
\end{example}

\section{Algorithmic implications}\label{sec:alg}
The results of the previous sections give several ways of simplifying the regularity test when the ideals involved have additional structure. In this section, with examples, we show how the previous results can be used to make calculations much shorter and easier.

Suppose, for instance, that $A$ is a polynomial ring over an effective field. To decide whether $a\in S_I$, one can test whether
$(I:a)=I$.
Thus, checking whether $a$ simultaneously belongs to $S_I$ and $S_J$ normally means carrying out two colon computations. The results of the previous sections show that this can sometimes be reduced considerably:
\begin{enumerate}
	\item if $a\in S_{I\cap J}$, it is enough to compare $(I:a)+(J:a)$ with $I+J$;
	\item if $I+J=A$, the two separate tests can be replaced by the single condition
	$(I\cap J:a)=I\cap J$
	using the equality $I\cap J=IJ$;
	\item in a finite locally free family, one can compute the determinant of multiplication and recover the entire regularity locus as a principal open subset of the base.
\end{enumerate}
There is a similar simplification when regular-denominator questions have to be answered repeatedly. Our Proposition~\ref{propdeneqantich} and Corollary~\ref{corollradical} allow the relevant denominator class to be recorded either by its finite antichain of maximal associated primes or by its canonical radical representative.

All of the operations appearing in these tests---ideal sums, intersections, quotients, and equality checks---are standard Gr\"obner-basis computations. By comparison, primary decomposition is a more elaborate procedure and may involve radicals, equidimensional decomposition, localization, factorization, and repeated Gr\"obner-basis calculations; see \cite{GTZ88,EHV92,GP08}. Since polynomial-ideal computations can have very poor worst-case complexity \cite{MM82}, we do not claim a general complexity improvement. The point is more modest and more practical: when the hypotheses of the preceding results are satisfied, one can often answer a specific regularity question without first computing the full associated-prime or primary-decomposition data.

Our first example shows how the comaximality case of Proposition~\ref{propchain} simplifies the computation: for two disjoint parallel components, two separate regularity tests can be replaced by a single calculation in the quotient by their product.

\begin{example}
Let $A=\mathds{R}[x,y]$, $I=(x)$, and $J=(x-1)$. These ideals define two parallel lines and satisfy $I+J=A$, so Proposition~\ref{propchain} gives
$	S_I\cap S_J=S_{I\cap J}$, where $I\cap J=IJ=(x^2-x)$. Let us
consider the polynomial
$
p=x^5y^7-x^4y^7+y=x^3y^7(x^2-x)+y.
$
By Chinese remainder theorem,  there is an isomorphism
\[
A/(x(x-1))\cong\mathds{R}[y]\times\mathds{R}[y]
\]
sending the class of $p$ to $(y,y)$. Since multiplication by $y$ is injective on each factor, $p\in S_{I\cap J}$, and hence $p\in S_I\cap S_J$. Note that the calculation uses one product quotient and avoids two separate regularity tests.
\end{example}

Interestingly enough, the same simplification from Proposition~\ref{propchain} still works when one of the ideals is defined by high-degree equations. In this case, the key observation comes from the much simpler linear generators, which make comaximality immediate.

\begin{example}
Let $A=\mathds{Q}[x,y,z]$. Let
$
I=(x^{10}+y^{10}-1,z-3)$ and $J=(z+3).$
Because $(z-3)-(z+3)=-6$ is a unit, $I+J=A$ and $I\cap J=IJ$. For the test element $s=x^{10}+y^{10}$, its image modulo $I$ is $1$, while its image modulo $J$ is the nonzero polynomial $x^{10}+y^{10}$ in the domain $\mathds{Q}[x,y]$. Hence $s\in S_I\cap S_J$, and Proposition~\ref{propchain} certifies the same fact by the single quotient
$
A/IJ$, where
\[
IJ=((x^{10}+y^{10}-1)(z+3),(z-3)(z+3)).
\]
Note that the benefit here is purely structural: once comaximality has been detected from the linear generators, one can work with a single product ideal.
\end{example}

When the ideals are not comaximal, the colon-sum criterion of Proposition~\ref{propeqfacts} can still bypass associated-prime and primary-decomposition computations, as the following monomial example demonstrates.

\begin{example}
Consider again the  ideals from Example \ref{monomialex} and let $a=u$. As computed earlier,
$
I\cap J=(xy)$ and $(I\cap J:u)=(xy)$,
so the hypothesis $u\in S_{I\cap J}$ is verified by one monomial colon. Moreover, we have
\[
(I:u)+(J:u)=(x,y)\ne(ux,uy,xy)=I+J.
\]
So the colon-sum criterion shows directly that $u$ is not a common regular denominator. Notice that the above computation only involves monomial ideals and an ideal-equality check, with no need to compute the associated primes of $A/I$ or $A/J$.
\end{example}

Our final example shows how the determinant identity of Theorem~\ref{detthm} can save a larger computation: the regularity locus for a rank-four quotient is recovered from two rank-two calculations, without ever having to write down the corresponding $4\times 4$ multiplication matrix.

\begin{example}
Let $R=\mathds{Q}[t]$, $B=R[x]$. Let
$
I=(x^2-t)$ and $J=(x^2-t+1).
$
Then $I$ and $J$ are comaximal with
$
I\cap J=IJ=((x^2-t)(x^2-t+1))$ and $B/(I+J)=0$.
For $b=x$,  with respect to the basis $1$, $x$ in each quotient, the multiplication maps on $B/I$ and $B/J$ have determinants
$
\det(\mu_x^I)=-t$ and
$\det(\mu_x^J)=1-t.
$
The determinant on the zero module is $1$. The multiplicative identity therefore gives the determinant on the rank-four quotient without constructing its $4\times4$ multiplication matrix:
\[
\det(\mu_x^{I\cap J})
=(-t)(1-t)=t^2-t.
\]
Consequently,
\[
U_I(x)\cap U_J(x)=D(t)\cap D(1-t)=D(t(t-1)).
\]
Thus, the two exceptional parameters $t=0$ and $t=1$ are obtained from two $2\times2$ determinants instead of a larger matrix over $R$.
\end{example}


\begin{thebibliography}{99}

\bibitem{AM69}
M.~F. Atiyah and I.~G. Macdonald,
\emph{Introduction to Commutative Algebra},
Addison--Wesley, Reading, MA, 1969.

\bibitem{Bou89}
N.~Bourbaki,
\emph{Algebra I: Chapters 1--3},
Elements of Mathematics,
Springer-Verlag, Berlin, 1989.

\bibitem{EHV92}
D. Eisenbud, C. Huneke, and W.~V. Vasconcelos,
Direct methods for primary decomposition,
\emph{Invent. Math.} \textbf{110} (1992), 207--235,
\href{https://doi.org/10.1007/BF01231331}{doi:10.1007/BF01231331}.

\bibitem{Eis95}
D. Eisenbud,
\emph{Commutative Algebra with a View Toward Algebraic Geometry},
Graduate Texts in Mathematics, vol.~150, Springer, New York, 1995.

\bibitem{Fer98}
D. Ferrand,
Un foncteur norme,
\emph{Bull. Soc. Math. France} \textbf{126} (1998), no.~1, 1--49,
\href{https://doi.org/10.24033/bsmf.2319}{doi:10.24033/bsmf.2319}.

\bibitem{GTZ88}
P. Gianni, B. Trager, and G. Zacharias,
Gr\"obner bases and primary decomposition of polynomial ideals,
\emph{J. Symbolic Comput.} \textbf{6} (1988), 149--167,
\href{https://doi.org/10.1016/S0747-7171(88)80040-3}{doi:10.1016/S0747-7171(88)80040-3}.

\bibitem{GP08}
G.-M. Greuel and G. Pfister,
\emph{A Singular Introduction to Commutative Algebra},
2nd ed., Springer, Berlin, 2008.

\bibitem{KM76}
F.~F. Knudsen and D. Mumford,
The projectivity of the moduli space of stable curves, I: Preliminaries on ``det'' and ``Div'',
\emph{Math. Scand.} \textbf{39} (1976), 19--55,
\href{https://doi.org/10.7146/math.scand.a-11642}{doi:10.7146/math.scand.a-11642}.

\bibitem{MM82}
E.~W. Mayr and A.~R. Meyer,
The complexity of the word problems for commutative semigroups and polynomial ideals,
\emph{Adv. Math.} \textbf{46} (1982), no.~3, 305--329,
\href{https://doi.org/10.1016/0001-8708(82)90048-2}{doi:10.1016/0001-8708(82)90048-2}.

\bibitem{Mat86}
H. Matsumura,
\emph{Commutative Ring Theory},
translated by M. Reid, Cambridge Studies in Advanced Mathematics, vol.~8,
Cambridge University Press, Cambridge, 1986.

\bibitem{Stk26}
The Stacks Project, (2026), \url{https://stacks.math.columbia.edu}.
\end{thebibliography}
\end{document}